\newtheorem{theorem}{Theorem}[section]
\newtheorem{lemma}[theorem]{Lemma}
\theoremstyle{definition}
\newtheorem{example}[theorem]{Example}
\numberwithin{equation}{section}
\newcommand\A{{\mathcal A}}
\newcommand\C{\mathbb{C}}
\begin{document}

\title[The Waring Problem for Matrix Algebras, II]{The Waring Problem for Matrix Algebras, II}


\author{Matej Brešar${}^{1,2,3}$} 

\author{Peter \v Semrl${}^{1,3}$}

\address{${}^{1}$Faculty of Mathematics and Physics,  University of Ljubljana,  Slovenia}
\address{${}^{2}$Faculty of Natural Sciences and Mathematics, University
of Maribor, Slovenia}

\address{${}^{3}$Institute of Mathematics, Physics and Mechanics, Ljubljana, Slovenia}

\email{matej.bresar@fmf.uni-lj.si}

\email{peter.semrl@fmf.uni-lj.si}

\thanks{Supported by ARRS Grants P1-0288 and J1-2454.}

\keywords{Waring problem, noncommutatative polynomial, matrix algebra}

\subjclass[2020]{16R10, 16S50}

\maketitle

\begin{abstract}
    Let $f$
    be a noncommutative 
    polynomial of degree $m\ge 1$ over 
     an algebraically closed field $F$ of characteristic $0$. 
     If $n\ge m-1$ and 
     $\alpha_1,\alpha_2,\alpha_3$ are nonzero elements from $F$ such that $\alpha_1+\alpha_2+\alpha_3=0$,  then every trace zero $n\times n$ matrix over $F$ can be written as
$\alpha_1 A_1+\alpha_2A_2+\alpha_3A_3$ for some $A_i$ in the image of $f$ in $M_n(F)$.
\end{abstract}
\section{Introduction}

In 2011, Larsen, Shalev, and Tiep 
\cite{LST} proved  that given a word  $w=w(x_1,\dots,x_m) \ne 1$,  every element in any
 finite non-abelian simple group $G$ of sufficiently high order can be written as the product of two elements from
$w(G)$,
  the image of the word map 
 induced by $w$. This is  a definitive solution of
 the Waring problem for finite simple groups 
 which covers several partial solutions obtained earlier. In particular, in 2009 Shalev \cite{Sh}  proved that, under the same assumptions, every element in $G$ is the product of three elements from
$w(G)$.

In the recent papers \cite{B, BS}, the authors initiated the study of various
 similar Waring type problems for algebras.  The present short paper is focused on the one  which seems particularly close to the group-theoretic Waring problem. It was stated as Question 4.8 in \cite{BS} and asks the following: Given a nonconstant noncommutative polynomial $f$  with coefficients from a field $F$, is it then true that, for any sufficiently large $n$, every traceless $n\times n$ matrix $T$ over $F$ is the difference of two elements from $f(M_n(F))$, the image of $f$
in the matrix algebra $M_n(F)$?
Unfortunately, we will not give a definitive answer, 
but will make a step towards the solution.
Namely, 
under the assumptions that 
$F$ is an algebraically closed field of characteristic $0$ and
the degree of $f$ does not exceed $n+1$, we will prove that $T$ is a linear combination of three elements from $f(M_n(F))$; in fact,
for any nonzero elements
$\alpha_1,\alpha_2,\alpha_3$ from $F$ satisfying $\alpha_1+\alpha_2+\alpha_3=0$, 
there exist $A_1,A_2,A_3\in f (M_n(F))$ such that  $$T=
\alpha_1 A_1+\alpha_2A_2+\alpha_3A_3$$ (for example, $T$ is equal to
$A_1+A_2-2A_3$ for some 
$A_i\in f(M_n(F))$).

The proof will be given in Section 3 where we will also provide all the necessary definitions and other information needed for understanding the problem.
Section 2 is devoted to a linear algebraic result that will be used in the proof, but is  of independent interest. The other two ingredients of the proof are a result from \cite{BS} on  images of polynomials in $M_p(F)$ with $p$  prime and Bertrand's postulate.

\section{A linear algebraic result}

We denote by $M_n(F)$ the algebra of all $n \times n$  matrices over the field $F$ and by sl$_n(F) $ the subspace of all trace zero matrices in $M_n(F)$. The goal of this section is to prove the following theorem.

\begin{theorem}\label{glavni}
Let $F$
 be a field of characteristic $0$, let $n$ and $q$ be positive integers such that $n \ge 2$ and $\frac{n}{2} \le q \le n$, let $\lambda_1 , \ldots , \lambda_q$ be distinct nonzero elements from $F$, and let $D\in M_n(F)$ be  the diagonal  matrix whose first $q$ diagonal entries are $\lambda_1 , \ldots , \lambda_q$ and all other diagonal entries are zero. Let
 $\alpha_1,\alpha_2,\alpha_3$ be nonzero elements from $F$ such that $\alpha_1+\alpha_2+\alpha_3=0$.
 Then every $T \in {\rm sl}_n(F)$ can be written as $$T = \alpha_1 A_1 + \alpha_2 A_2 +\alpha_3A_3$$
 for some matrices $A_1,A_2,A_3$ that are similar to $D$.
\end{theorem}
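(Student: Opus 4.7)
The plan is to construct $A_1,A_2,A_3$ explicitly by exploiting the flexibility of the similarity class $\mathcal{O}_D := \{SDS^{-1} : S \in GL_n(F)\}$, together with the hypothesis $q \ge n/2$.

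The key preliminary observation is that $\mathcal{O}_D$ is stable under many elementary perturbations of $D$. Setting $\lambda_k := 0$ for $k > q$, a direct computation gives $(I + t e_{ij})D(I - t e_{ij}) = D + t(\lambda_j - \lambda_i)e_{ij}$ for $i \ne j$, so $D + c\,e_{ij}$ is similar to $D$ for every $c \in F$ provided $\lambda_i \ne \lambda_j$, i.e.\ whenever not both of $i,j$ lie in the ``zero block'' $\{q+1,\dots,n\}$. More generally, any matrix of the form $\bigl(\begin{smallmatrix}\Lambda & B\\ 0 & 0\end{smallmatrix}\bigr)$ with $\Lambda \in M_q(F)$ having simple spectrum $\{\lambda_1,\dots,\lambda_q\}$ and $B$ an arbitrary $q \times (n-q)$ block is similar to $D$, since it is diagonalizable with the correct Jordan form; by further conjugation with a permutation matrix, the nonzero block can be moved to other positions, and by combining the two constructions one obtains a large parametric family inside $\mathcal{O}_D$.

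Next, I exploit the identity $\alpha_1 A_1 + \alpha_2 A_2 + \alpha_3 A_3 = \alpha_1(A_1 - A_3) + \alpha_2(A_2 - A_3)$, valid because $\alpha_1 + \alpha_2 + \alpha_3 = 0$, which lets one treat $A_3$ as a free ``offset'' while the other two summands contribute differences of elements of $\mathcal{O}_D$. After a preliminary simultaneous conjugation of $T$ and the $A_i$, I may assume $T$ is upper triangular. Strictly upper-triangular entries of $T$ with support outside the ``zero-block corner'' $\{(i,j) : i,j > q\}$ can be absorbed via the elementary conjugations above; the remaining entries, in the zero-block corner, are handled by using, for one of the $A_i$, a representative of $\mathcal{O}_D$ whose nonzero block sits in positions $\{n-q+1,\dots,n\}$. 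The hypothesis $q \ge n/2$ is exactly what guarantees that $\{1,\dots,q\} \cup \{n-q+1,\dots,n\} = \{1,\dots,n\}$, so that two differently-placed representatives together cover every index.

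The main obstacle is matching the diagonal part of $T$ simultaneously with the off-diagonal structure: a matrix similar to $D$ can be given essentially any diagonal with trace $\tr(D)$, but the diagonal and off-diagonal entries of a single $A_i$ are not independent. Combining the freedom from having three summands with the fact, guaranteed by $q \ge n/2$, that each diagonal index lies in the nonzero block of at least two of three suitably placed representatives, should yield enough parameters to realize any $T \in {\rm sl}_n(F)$; the detailed bookkeeping to simultaneously satisfy all similarity constraints, all while keeping each $A_i$ in $\mathcal{O}_D$, is the heart of the argument.
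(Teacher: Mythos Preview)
Your proposal has two genuine gaps. First, you reduce to the case where $T$ is upper triangular ``after a preliminary simultaneous conjugation,'' but the theorem is stated over an arbitrary field $F$ of characteristic $0$, and a trace-zero matrix over such a field need not be triangularizable; for example $\left[\begin{smallmatrix}0&1\\-1&0\end{smallmatrix}\right]$ over $F=\mathbb{Q}$ has no eigenvalue in $F$, so this reduction is illegitimate. Second, and more seriously, the step you yourself call ``the heart of the argument'' is not supplied. Your elementary conjugations $D\mapsto D+c\,e_{ij}$ move only off-diagonal entries and leave the diagonal fixed at $(\lambda_1,\dots,\lambda_q,0,\dots,0)$; conversely, once you conjugate $D$ to achieve a prescribed diagonal you lose control of the off-diagonal part. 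Saying that three summands ``should yield enough parameters'' is not a proof: the similarity orbit $\mathcal{O}_D$ is not linear and the diagonal and off-diagonal constraints are genuinely coupled, so a dimension count alone does not close the argument.

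The paper avoids both issues by a specific sequential construction rather than a parameter count. It first chooses $A_1\in\mathcal{O}_D$ so that the lower-right $(n-q)\times(n-q)$ block of $T-\alpha_1 A_1$ vanishes while the upper-left $q\times q$ block $S_1$ remains nonscalar (Lemma~\ref{prva}; this is where $q\ge n/2$ enters). A Fillmore-type lemma (Lemma~\ref{seto}) then lets one conjugate so that $S_1$ has diagonal entries $-\alpha_1\lambda_1,\dots,-\alpha_1\lambda_q$; at that point $T-\alpha_1 A_1$ splits as $\alpha_2 A_2+\alpha_3 A_3$ with $A_2$ block lower triangular and $A_3$ block upper triangular, each automatically in $\mathcal{O}_D$ by Lemmas~\ref{bubu} and~\ref{bubum}. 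The idea missing from your outline is exactly this ordering: spend one summand to kill the bottom-right block, and only then invoke the diagonal-prescribing lemma on the remaining $q\times q$ corner, so that the final lower/upper triangular split is forced to land in $\mathcal{O}_D$.
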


For the proof we  need several lemmas. In what follows, $F$, $n$, $q$,  $\lambda_1 , \ldots , \lambda_q$, and $D$ will be as in Theorem \ref{glavni}.
We  further denote by $D_q$ the $q \times q$ diagonal matrix with the diagonal entries $\lambda_1 , \ldots , \lambda_q$.  As usual, $I_m$ stands for the identity $m\times m$ matrix.

The first two lemmas
follow easily from the fact that triangular matrices with distinct diagonal entries  are 
diagonalizable, so we omit the proofs.


\begin{lemma}\label{bubu}
Let $L$ be a lower triangular $q \times q$ matrix with the diagonal entries $\lambda_1 , \ldots , \lambda_q$ and $W$ any $(n-q) \times q$ matrix. Then the matrix
$$
 \left[ \begin{matrix} L & 0 \cr W  & 0 \cr \end{matrix} \right]
$$
is similar to $D$.
\end{lemma}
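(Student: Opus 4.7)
The plan is to reduce the given block matrix to $D$ by a chain of two similarities, guided by the authors' hint that the lemma rests on the diagonalizability of triangular matrices with distinct diagonal entries. The two hypotheses on the $\lambda_i$ break up naturally into two roles: the nonzero hypothesis will be used to sweep out the $W$ block by a block-elementary conjugation, and the distinctness hypothesis will then be used to diagonalize the surviving $L$ block.

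Concretely, in the first step I would note that $L$, being lower triangular with nonzero diagonal entries $\lambda_1,\dots,\lambda_q$, is invertible, so $X := W L^{-1}$ is a well-defined $(n-q)\times q$ matrix. Setting
$$
P = \left[ \begin{matrix} I_q & 0 \cr X & I_{n-q} \cr \end{matrix} \right],
\qquad
P^{-1} = \left[ \begin{matrix} I_q & 0 \cr -X & I_{n-q} \cr \end{matrix} \right],
$$
a routine $2\times 2$ block multiplication gives
$$
P^{-1} \left[ \begin{matrix} L & 0 \cr W & 0 \cr \end{matrix} \right] P = \left[ \begin{matrix} L & 0 \cr W - XL & 0 \cr \end{matrix} \right] = \left[ \begin{matrix} L & 0 \cr 0 & 0 \cr \end{matrix} \right],
$$
since $XL = W$. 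In the second step, the hypothesis that $\lambda_1,\dots,\lambda_q$ are pairwise distinct makes $L$ a triangular matrix with simple spectrum, hence diagonalizable: there exists $Q \in GL_q(F)$ with $Q^{-1} L Q = D_q$. Conjugating by the block-diagonal matrix $Q \oplus I_{n-q}$ converts $L \oplus 0$ into $D_q \oplus 0 = D$, and composing this with the similarity from the first step yields that the original block matrix is similar to $D$.

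I do not expect any real obstacle here, which is presumably why the authors omit the proof. The delicate points are exactly the two hypotheses built into Theorem~\ref{glavni}: if any $\lambda_i$ vanished, $L$ would fail to be invertible and the elementary conjugation that kills $W$ would break down; if the $\lambda_i$ were not distinct, $L$ need not be diagonalizable and the second step would fail. Given both hypotheses, the argument reduces to the block-matrix computation above plus the standard diagonalization of a triangular matrix with simple spectrum.
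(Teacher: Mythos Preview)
Your argument is correct and is exactly the kind of two-step reduction the authors have in mind when they say the lemma follows from the diagonalizability of triangular matrices with distinct diagonal entries; the paper omits the proof entirely, and your write-up fills in precisely the expected details (the block-elementary conjugation using invertibility of $L$, followed by diagonalizing $L$).
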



\begin{lemma}\label{bubum}
Let $M$ be an upper triangular $q \times q$ matrix with the diagonal entries $\lambda_1 , \ldots , \lambda_q$ and $V$ any $q\times (n-q)$ matrix. Then the matrix
$$
 \left[ \begin{matrix} M & V \cr 0  & 0 \cr \end{matrix} \right]
$$
is similar to $D$.
\end{lemma}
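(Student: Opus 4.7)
The plan is to reduce $B := \left[ \begin{matrix} M & V \\ 0 & 0 \end{matrix} \right]$ to block diagonal form by an explicit unipotent similarity, and then diagonalize the surviving nonzero block using the fact that $M$ has distinct diagonal entries.

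Since each $\lambda_i$ is nonzero, $M$ is invertible, so I would set $P := \left[ \begin{matrix} I_q & M^{-1} V \\ 0 & I_{n-q} \end{matrix} \right]$, which is invertible with inverse $\left[ \begin{matrix} I_q & -M^{-1} V \\ 0 & I_{n-q} \end{matrix} \right]$. A direct block multiplication yields $P \, B \, P^{-1} = \left[ \begin{matrix} M & 0 \\ 0 & 0 \end{matrix} \right]$, the off-diagonal block vanishing because $M(-M^{-1} V) + V = 0$. Since $M$ is upper triangular with the distinct diagonal entries $\lambda_1, \ldots, \lambda_q$, it is diagonalizable and hence similar to $D_q$ via some invertible $q \times q$ matrix $Q$; conjugating by $\left[ \begin{matrix} Q & 0 \\ 0 & I_{n-q} \end{matrix} \right]$ then brings $\left[ \begin{matrix} M & 0 \\ 0 & 0 \end{matrix} \right]$ to $\left[ \begin{matrix} D_q & 0 \\ 0 & 0 \end{matrix} \right] = D$, finishing the argument.

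There is no genuine obstacle here: the hypotheses fit the argument exactly, with $\lambda_i \neq 0$ supplying the invertibility of $M$ needed to kill the $V$-block, and the distinctness of the $\lambda_i$ supplying the diagonalizability of $M$. The same maneuver (with a lower unipotent conjugator applied from the opposite side) handles the companion Lemma \ref{bubu}, which is presumably why the authors choose to omit both proofs.
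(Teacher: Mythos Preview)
Your proof is correct and fills in exactly what the paper omits. The authors give no argument beyond the remark that the lemma ``follows easily from the fact that triangular matrices with distinct diagonal entries are diagonalizable.'' Your explicit unipotent conjugation killing the $V$-block, followed by diagonalizing $M$, is a clean way to make this precise; note that applying the quoted fact directly to the full $n\times n$ matrix would require a word of care when $n-q\ge 2$, since then the diagonal has repeated zeros, and your reduction neatly sidesteps that.
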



\begin{lemma}\label{babam}
 Let $k\ge 2$ and let
 $Z$  be a nonscalar $k \times k$ matrix. Then there exist $k \times k$ matrices $W$ and  $X$ such that $WX=Z$ and  $XW$ is not a
diagonal matrix.
\end{lemma}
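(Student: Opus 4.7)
The strategy is to pick $W$ invertible, so that setting $X := W^{-1} Z$ automatically yields $WX = Z$, while $XW = W^{-1} Z W$ is forced to be a conjugate of $Z$. With this choice the lemma reduces to finding an invertible $W$ such that $W^{-1} Z W$ is not diagonal; equivalently, to showing that the similarity orbit of the nonscalar matrix $Z$ contains a non-diagonal representative.

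If $Z$ is itself not diagonal, I would take $W = I_k$ and $X = Z$, and there is nothing to do. Otherwise $Z = \diag(z_1,\dots,z_k)$ is diagonal, and because $Z$ is nonscalar I can pick indices $i \ne j$ with $z_i \ne z_j$. I would then choose $W := I_k + E_{ij}$, where $E_{ij}$ is the standard matrix unit. This $W$ is unipotent triangular, hence invertible, with $W^{-1} = I_k - E_{ij}$ (using $E_{ij}^2 = 0$ for $i \ne j$). Since $Z$ is diagonal one has $Z E_{ij} = z_i E_{ij}$ and $E_{ij} Z = z_j E_{ij}$, and a short expansion gives
$$W^{-1} Z W \;=\; Z + (z_i - z_j)\, E_{ij},$$
which has a nonzero off-diagonal entry at position $(i,j)$ and is therefore not diagonal. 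Setting $X := W^{-1} Z$ then finishes the argument.

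I do not foresee a substantive obstacle. The essential observation is the familiar one that the scalar matrices are precisely those whose entire similarity orbit consists of diagonal matrices, and the elementary transvection $I_k + E_{ij}$ provides the cheapest witness in the diagonal case. The only point that requires a bit of care is that $Z$ need not be invertible, which is why one arranges the factorization $Z = WX$ with the \emph{left} factor $W$ chosen invertible rather than $X$.
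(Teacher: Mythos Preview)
Your proof is correct and follows essentially the same approach as the paper: choose one factor invertible so that $XW$ becomes a conjugate of $Z$, then show the similarity orbit of a nonscalar matrix contains a non-diagonal element. The only difference is cosmetic---the paper takes $X$ invertible (giving $XW = XZX^{-1}$) and reduces to the $2\times 2$ case left as an exercise, whereas you take $W$ invertible and supply the explicit transvection $I_k + E_{ij}$ directly, which is arguably cleaner.
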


\begin{proof} All we need to do is to find an invertible matrix $X$ such that $XZX^{-1}$ is not diagonal. Indeed,
take any invertible $k \times k$ matrix $X$ and set $W = ZX^{-1}$. Then $WX = Z$ and $XW = XZX^{-1}$.

We may assume that
 $Z$ is diagonal. As $Z$ is nonscalar, we may also assume with no loss of generality  that the first and the second diagonal entry of $Z$ are not equal. Considering invertible matrices $X$ of the form
$$
X = \left[ \begin{matrix} X_1 & 0 \cr 0 & I_{k-2} \end{matrix} \right]  
$$
and the upper-left $2 \times 2$ corner of the matrix $XZX^{-1}$ we reduce the problem to the case where $Z$ is a $2 \times 2$ nonscalar diagonal matrix, which is an easy exercise.
\end{proof}

\begin{lemma}\label{bebem}
Let $k,l$ be positive integers with $l > k$ and $Z$ any $k \times k$ matrix. Then there exist a $k \times l$ matrix $W$ and an $l \times k$ matrix $X$ such that $WX=Z$ and the $l \times l$ matrix $XW$ is not 
diagonal. 
\end{lemma}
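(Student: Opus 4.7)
The plan is to exploit the extra room provided by the assumption $l>k$: unlike in Lemma \ref{babam}, where we were restricted to square factors of the same size, here the rectangular shape allows us to stuff a nonzero block into an off-diagonal region of $XW$ without disturbing the product $WX$.

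Concretely, I would take
$$
X=\left[\begin{matrix} I_k \\ 0 \end{matrix}\right]\qquad\text{and}\qquad W=\left[\begin{matrix} Z & Y\end{matrix}\right],
$$
where $Y$ is an arbitrary $k\times(l-k)$ matrix to be chosen. Then $WX=Z$ is automatic by block multiplication, regardless of $Y$, so the product constraint is immediately satisfied. On the other hand,
$$
XW=\left[\begin{matrix} I_k \\ 0 \end{matrix}\right]\left[\begin{matrix} Z & Y\end{matrix}\right]=\left[\begin{matrix} Z & Y \\ 0 & 0 \end{matrix}\right],
$$
and every entry of the top-right $k\times(l-k)$ block sits in a position $(i,j)$ with $i\le k<j$, i.e.\ strictly above the main diagonal of the $l\times l$ matrix. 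Thus it suffices to pick $Y\ne 0$ (say $Y=E_{11}$, the $k\times(l-k)$ matrix with a single $1$ in the upper-left corner) to force $XW$ to have a nonzero off-diagonal entry, and hence to fail to be diagonal.

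There is no real obstacle: the hypothesis $l>k$ guarantees the existence of the off-diagonal block $Y$, and the only work is the routine block-multiplication verification above. In contrast to Lemma \ref{babam}, we do not need to use that $Z$ is nonscalar, because the non-diagonality is achieved entirely in the rectangular tail of the factorization.
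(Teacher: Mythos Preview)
Your proof is correct and is essentially identical to the paper's own argument: the paper also sets $W=\left[\begin{matrix} Z & Y\end{matrix}\right]$ and $X=\left[\begin{smallmatrix} I_k\\ 0\end{smallmatrix}\right]$ with $Y\neq 0$, and observes that $WX=Z$ while $XW$ is not diagonal.
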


\begin{proof}
Take any nonzero  $k \times (l-k)$ matrix $Y$ and set
$$
W = \left[ \begin{matrix} Z & Y \cr \end{matrix} \right]  \ \ \ {\rm and} \ \ \  X = \left[ \begin{matrix} I_k \cr 0 \cr \end{matrix} \right] .
$$
Then clearly, 
$WX=Z$ and $XW$ is not 
diagonal. 
\end{proof}

\begin{lemma}\label{prva}
Let $n > 2$ and let $Z$ be an $(n-q) \times (n-q)$ matrix. If $n$ is even and $q = \frac{n}{2}$ we additionally assume that $Z$ is not a scalar matrix.
Then there exist a nonscalar $q\times q$ matrix $U$, a $q \times (n-q)$ matrix $V$, and an $(n-q) \times q$ matrix $W$ such that
$$
\left[ \begin{matrix} U & V \cr W  & Z \cr \end{matrix} \right]
$$
is similar to $D$.
\end{lemma}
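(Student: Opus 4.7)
The plan is to construct the block matrix $\left[\begin{matrix} U & V \\ W & Z \end{matrix}\right]$ as a product $PQ$ where $P$ is $n\times q$ and $Q$ is $q\times n$, so that the matrix automatically has rank at most $q$ and its nonzero spectrum is controlled by the $q\times q$ matrix $QP$.

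The first step is to factor $Z = RS$ with $R$ of size $(n-q)\times q$, $S$ of size $q\times(n-q)$, and the additional property that the $q\times q$ matrix $SR$ is \emph{not} diagonal. When $q>n/2$, this is exactly Lemma \ref{bebem} applied with $k=n-q$ and $l=q$; when $q=n/2$, the matrices $R,S$ are both $q\times q$ with $q\ge 2$, and the hypothesis that $Z$ is nonscalar lets us invoke Lemma \ref{babam}. Granted such $R$ and $S$, set $C := D_q-SR$ and
$$
P := \left[\begin{matrix} I_q \\ R \end{matrix}\right], \qquad Q := \left[\begin{matrix} C & S \end{matrix}\right],
$$
so that $M := PQ = \left[\begin{matrix} C & S \\ RC & RS \end{matrix}\right]$ has the desired block form with $U=C$, $V=S$, $W=RC$, and bottom-right block $RS = Z$.

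The key identity $QP = C + SR = D_q$ together with the standard fact that $PQ$ and $QP$ share the same nonzero eigenvalues with the same multiplicities shows that the nonzero eigenvalues of $M$ are exactly $\lambda_1,\ldots,\lambda_q$. Because $P$ has full column rank $q$ (it contains $I_q$) and $QP=D_q$ is invertible, $M$ has rank exactly $q$, so the eigenvalue $0$ has geometric multiplicity $n-q$ matching its algebraic multiplicity. With the $\lambda_i$ distinct, this forces $M$ to be diagonalizable with the same Jordan form as $D$, whence $M$ is similar to $D$. Finally, since $SR$ is not diagonal and $D_q$ is, the matrix $U = D_q - SR$ is not diagonal, and in particular not scalar, as required.

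No serious obstacle is anticipated: the heart of the argument is the $PQ$--$QP$ eigenvalue identity combined with the rank computation, and the two preparatory lemmas have been set up precisely to supply the factorization of $Z$ in the two size regimes permitted by the hypotheses on $Z$.
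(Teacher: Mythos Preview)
Your argument is correct and runs closely parallel to the paper's: both invoke Lemmas~\ref{babam} and~\ref{bebem} to factor $Z$ as a product whose reversed product $SR$ is non-diagonal, and both take $U = D_q - SR$ (the paper's $D_q - XW$). The only difference is in how similarity to $D$ is verified. The paper chooses its off-diagonal blocks so that conjugation by $\left[\begin{smallmatrix} I_q & X \\ 0 & I_{n-q}\end{smallmatrix}\right]$ carries the block matrix to $\left[\begin{smallmatrix} D_q & 0 \\ W & 0\end{smallmatrix}\right]$ and then invokes Lemma~\ref{bubu}; you instead realise the block matrix directly as $PQ$ with $QP = D_q$ and conclude via the $PQ$--$QP$ spectral identity and a rank count, which absorbs the explicit conjugation and Lemma~\ref{bubu} into one step. (The paper's matrix in fact also factors as $\left[\begin{smallmatrix} C \\ R\end{smallmatrix}\right]\left[\begin{smallmatrix} I_q & S\end{smallmatrix}\right]$ with the same reversed product $D_q$, so the two constructions are siblings.) One small omission: you do not address the degenerate case $q = n$, where $Z$ is empty and your factorization would give $SR = 0$, which \emph{is} diagonal; the paper dismisses this case as trivial in one line, taking $U = D_q$.
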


\begin{proof}
The statement is trivial when $n=q$, so assume that $q < n$.

If $q > n-q$ we can apply Lemma \ref{bebem} to find an $(n-q) \times q$ matrix $W$ and a $q \times (n-q)$ matrix $X$ such that
$
WX = Z
$
and $XW$ is not diagonal. In the remaining case where $q= n-q$  the existence of such matrices $W$ and $X$ follows from Lemma \ref{babam}.
 
Set 
$$
U = D_q - XW \ \ \ {\rm and} \ \ \  V = UX + XWX - XZ.
$$
It follows that $U$ is not diagonal, and hence $U$ is not a scalar matrix. Moreover,
$$
\left[ \begin{matrix} I_q & X \cr 0  & I_{n-q} \cr \end{matrix} \right] \, \left[ \begin{matrix} U & V \cr W  & Z \cr \end{matrix} \right] \, 
\left[ \begin{matrix} I_q & -X \cr 0  & I_{n-q} \cr \end{matrix} \right]
= \left[ \begin{matrix} D_q & 0 \cr W  & 0 \cr \end{matrix} \right],
$$
which together with Lemma \ref{bubu} completes the proof.
\end{proof}

\begin{lemma}\label{bibim}
If a $q \times q$ matrix $T$ is not a scalar matrix, then there exists an invertible $q \times q$ matrix $R$ such that $RTR^{-1} - T$ is not diagonal. 
\end{lemma}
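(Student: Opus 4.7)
The plan is to split the argument into two cases based on whether $T$ is already diagonal, and in each case to exhibit an explicit invertible $R$.

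Suppose first that $T$ is not diagonal, so there exist distinct indices $i,j$ with $T_{ij}\ne 0$. I would take $R$ to be the diagonal matrix with some $\lambda\in F\setminus\{0,1\}$ in its $i$-th diagonal position and $1$ in the remaining diagonal positions. Conjugation by this $R$ multiplies the $(k,l)$-entry of any matrix by $\lambda^{\delta_{ki}-\delta_{li}}$; in particular it fixes every diagonal entry of $T$ but sends $T_{ij}$ to $\lambda T_{ij}$. Hence the $(i,j)$-entry of $RTR^{-1}-T$ equals $(\lambda-1)T_{ij}\ne 0$, so $RTR^{-1}-T$ is not diagonal.

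In the remaining case, $T$ is diagonal but not scalar, so there exist distinct indices $i,j$ with $T_{ii}\ne T_{jj}$. Let $E_{ij}$ denote the standard matrix unit and set $R=I+E_{ij}$; then $R$ is invertible with inverse $I-E_{ij}$, and the diagonality of $T$ gives $E_{ij}T=T_{jj}E_{ij}$, $TE_{ij}=T_{ii}E_{ij}$, and $E_{ij}^{2}=0$. A brief computation then yields
$$
RTR^{-1}-T \;=\; E_{ij}T-TE_{ij} \;=\; (T_{jj}-T_{ii})\,E_{ij},
$$
whose only nonzero entry lies at the off-diagonal position $(i,j)$, so again $RTR^{-1}-T$ is not diagonal.

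I do not anticipate any real obstacle: each case reduces to a well-chosen explicit $R$ together with a short verification, with the cross terms in the second case vanishing thanks to $E_{ij}^{2}=0$. The lemma is in the same spirit as the other short preparatory results (Lemmas~\ref{bubu}--\ref{bebem}) of this section.
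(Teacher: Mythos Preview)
Your argument is correct. The two cases are handled cleanly: in the non-diagonal case a diagonal $R$ with a single entry $\lambda\notin\{0,1\}$ (available since $\operatorname{char}F=0$) perturbs the chosen off-diagonal entry, and in the diagonal non-scalar case the transvection $R=I+E_{ij}$ does the job, the term $E_{ij}TE_{ij}$ indeed vanishing because $E_{ij}T=T_{jj}E_{ij}$ and $E_{ij}^2=0$.

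Your route differs from the paper's. The paper first observes that the property ``$RTR^{-1}-T$ is not diagonal for some invertible $R$'' is invariant under permutation similarity, uses this to arrange that the upper-left $2\times 2$ block of $T$ is non-scalar, and then restricts attention to $R=\left[\begin{smallmatrix}R_1&0\\0&I_{q-2}\end{smallmatrix}\right]$, thereby reducing to the $2\times 2$ case, which it leaves as an exercise. Your approach is more direct and fully explicit: you avoid any reduction and simply write down $R$ in each case. The paper's method has the mild conceptual advantage of isolating a $2\times 2$ kernel of the problem (in the spirit of Lemma~\ref{babam}), while yours is shorter, self-contained, and leaves nothing to the reader.
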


\begin{proof} 
Observe first that if $P$ is a $q\times q$ permutation matrix then a $q\times q$ matrix $W$ is diagonal if and only if $PWP^{-1}$ is diagonal. Since $$P(RTR^{-1} - T)P^{-1} = (PRP^{-1}) PTP^{-1}  (PRP^{-1})^{-1} - PTP^{-1}$$ we may replace $T$ by $PTP^{-1}$, where $P$ is any permutation matrix. As $T$ is not a scalar matrix, the upper-left $2\times 2$ corner of $PTP^{-1}$ is not scalar for some permutation matrix $P$. 
Then considering invertible matrices
$$
R = \left[ \begin{matrix} R_1 & 0 \cr 0 & I_{q-2} \end{matrix} \right]  
$$
we reduce the general problem to the $2\times 2$ case, which is an easy exercise.
\end{proof}

The next  lemma may be interesting in its own right.
\begin{lemma}\label{seto}
Let $k$ be a positive integer, $B$  any nonscalar $k\times k$ matrix, and $\mu_1 , \ldots , \mu_k$ elements from  $F$ such that 
$$
{\rm tr}\, B = \mu_1 + \ldots + \mu_k .
$$
Then $B$ is similar to
 a $k \times k$ matrix  whose 
  diagonal entries  are $\mu_1 , \ldots , \mu_k$.
\end{lemma}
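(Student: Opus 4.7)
The plan is to induct on $k$. For $k=1$ the statement is vacuous (no nonscalar $1 \times 1$ matrices exist), and the case $k=2$ is direct: choose $v_1$ so that $v_1$ and $Bv_1$ are linearly independent (possible because $B$ is nonscalar) and take $v_2 = Bv_1 - \mu_1 v_1$; in the basis $\{v_1, v_2\}$ the first column of $B$ reads $(\mu_1, 1)^T$, so the $(1,1)$ entry is $\mu_1$, and trace invariance forces the $(2,2)$ entry to be $\mu_2$.

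For the inductive step with $k \ge 3$, the goal is to produce a basis $v_1, v_2, \ldots, v_k$ of $F^k$ in which $B$ has first column $(\mu_1, 1, 0, \ldots, 0)^T$ and whose lower-right $(k-1)\times(k-1)$ block $B''$ is nonscalar. Granted this, the inductive hypothesis applied to $B''$ (which satisfies $\tr B'' = \tr B - \mu_1 = \mu_2 + \cdots + \mu_k$) would supply an invertible $(k-1)\times(k-1)$ matrix $Q$ with $Q^{-1}B''Q$ having diagonal $\mu_2, \ldots, \mu_k$; conjugating the matrix of $B$ by $\left[\begin{matrix} 1 & 0 \cr 0 & Q \end{matrix}\right]$ preserves the $(1,1)$ entry $\mu_1$ and installs the desired diagonal below.

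The delicate point, which I expect to be the main obstacle, is constructing such a basis. I would start by picking $v_1$ that is not an eigenvector of $B$ (available because $B$ is nonscalar) and setting $v_2 = Bv_1 - \mu_1 v_1$, which is linearly independent from $v_1$. Suppose, for contradiction, that every completion $v_3, \ldots, v_k$ of $v_1, v_2$ to a basis yielded a scalar $B''$; since $\tr B''$ is forced to equal $\tr B - \mu_1$, this common value would have to be $c = (\tr B - \mu_1)/(k-1)$. Then for each $u \in F^k \setminus W$, with $W = \linspan(v_1, v_2)$, taking $v_3 = u$ in a completion would force $(B - cI)u \in \linspan(v_1)$. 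The linear subspace $L = \{u \in F^k : (B - cI)u \in \linspan(v_1)\}$ thus contains $F^k \setminus W$; this in fact forces $L = F^k$, for if some $u_0$ lay outside $L$ it would have to lie in $W$, and then for any $v_0 \notin W$ both $v_0$ and $v_0 + u_0$ would lie in $F^k \setminus W \subseteq L$, making $u_0 \in L$ by subtraction. But $L = F^k$ implies $(B - cI)v_1 \in \linspan(v_1)$, contradicting the fact that $v_1$ was chosen not to be an eigenvector of $B$. Hence some completion yields a nonscalar $B''$, and the induction closes.
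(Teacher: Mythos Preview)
Your proof is correct and follows the same inductive skeleton as the paper---fix the $(1,1)$ entry to be $\mu_1$, arrange that the lower-right $(k-1)\times(k-1)$ block is nonscalar, and recurse---but your execution of the middle step is genuinely cleaner. The paper first produces \emph{some} matrix $C=\left[\begin{smallmatrix}\mu_1 & u\\ v & C_1\end{smallmatrix}\right]$ similar to $B$ (via a linear-functional argument), and then runs a case analysis: if $C_1$ is nonscalar it recurses; if $C_1=\lambda I$ with $u\ne 0$ or $v\ne 0$ it performs an extra elementary conjugation to make the block nonscalar; and if $C_1=\lambda I$ with $u=v=0$ it restarts the first step with a different $\mu_j$ and argues this bad case cannot repeat. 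Your approach instead commits from the outset to $v_1$ being a non-eigenvector and $v_2=Bv_1-\mu_1 v_1$, and then shows in one stroke, via the subspace $L=\{u:(B-cI)u\in\linspan(v_1)\}$, that \emph{some} completion gives a nonscalar block---the point being that $L\supseteq F^k\setminus W$ forces $L=F^k$, contradicting the choice of $v_1$. This avoids the entire case split, including the somewhat ad~hoc final case of the paper, at the cost of a slightly more abstract contradiction argument. Both approaches rely on the same elementary facts (existence of non-eigenvectors for nonscalar matrices, trace invariance), so neither is deeper; yours is simply more uniform.
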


\begin{proof}
Throughout the proof, we  identify $k\times k$ matrices with linear operators on $F^k$.

In the first step, we will show  that there exists a $k \times k$ matrix $C$ similar to $B$ such that its $(1,1)$-entry  is equal to $\mu_1$. 
Since $B$ is not a scalar matrix there exists a vector $x \in F^k$  such that $Bx$ and $x$ are linearly independent. It follows that we can find a linear functional $f : F^k \to F$ such that $f(x) = 1$ and $f(Bx) = \mu_1$. We apply the fact that the kernel of $f$ is of codimension one to see that there exists a nonzero vector $y \in {\rm Ker}\, f \cap {\rm span}\, \{ x, Bx \}$. From $f(x) = 1$ and $f(y) = 0$ we infer that $x$ and $y$ are linearly independent. Therefore,
$$
Bx = \alpha x + \beta y
$$
for some $\alpha, \beta \in F$. Applying $f$ to the above equality we arrive at $$\alpha = f(Bx) = \mu_1.$$ Hence, if we extend the linearly independent pair of vectors $x,y$ to a basis of $F^k$ and represent the operator $B$ in this basis, then the first diagonal entry of the obtained matrix is equal to $\mu_1$.

We are now ready to prove the statement of the lemma. There are no nonscalar matrices if $k=1$. 
The first step implies that our statement is true for $k=2$.

We proceed by induction. Let $k \ge 3$ and assume that the statement is true for all matrices of the size $(k-1) \times (k-1)$. Let $B$ be a 
$k\times k$ matrix and $\mu_1 , \ldots , \mu_k$ elements from $F$ such that 
${\rm tr}\, B = \mu_1 + \ldots + \mu_k$. We already know that $B$ is similar to the matrix
$$
C = \left[ \begin{matrix} \mu_1 & u \cr v  & C_{1} \cr \end{matrix} \right],
$$
where $C_1$ is a $(k-1) \times (k-1)$ matrix. Clearly,
$$
{\rm tr}\, C_1 = \mu_2 + \ldots + \mu_k .
$$

We distinguish two possibilities. The easier one is when $C_1$ is not a scalar matrix. Then 
by the induction hypothesis there exists an invertible $(k-1) \times (k-1)$ matrix $S$ such that $SC_1 S^{-1}$ is a matrix whose diagonal entries are $\mu_2 , \ldots, \mu_k$.
Therefore, 
$$
 \left[ \begin{matrix} 1 & 0 \cr 0  & S \cr \end{matrix} \right] \,  \left[ \begin{matrix} \mu_1 & u \cr v  & C_{1} \cr \end{matrix} \right] \,  \left[ \begin{matrix} 1 & 0 \cr 0  & S^{-1} \cr \end{matrix} \right] 
= \left[ \begin{matrix} \mu_1 & uS^{-1} \cr Sv  & SC_{1}S^{-1} \cr \end{matrix} \right],
$$
which gives the desired conclusion.

It remains to consider the case when $B$ is similar to the matrix
$$
C = \left[ \begin{matrix} \mu_1 & u \cr v  & \lambda I_{k-1} \cr \end{matrix} \right],
$$
where 
$$
\lambda = (k-1)^{-1} (\mu_2 + \ldots + \mu_k ).
$$
We again distinguish two cases. Let us first assume that $u \not=0$ or $v\not=0$. We will consider just the case when $v\not=0$ since the proof in the case when $u \not=0$ goes through in an almost the same way. Since $v$ is a nonzero $(k-1) \times 1$ column vector and $k-1 \ge 2$ we can find an $1 \times (k-1)$ vector $a$ such that 
$$
av= 0 \ \ \ {\rm and} \ \ \ va \not = 0.
$$
We have
$$
\left[ \begin{matrix} 1 & a \cr 0  &  I_{k-1} \cr \end{matrix} \right] \, \left[ \begin{matrix} \mu_1 & u \cr v  & \lambda I_{k-1} \cr \end{matrix} \right] \, \left[ \begin{matrix} 1 & -a \cr 0  &  I_{k-1} \cr \end{matrix} \right] 
=  \left[ \begin{matrix} \mu_1 & u + (\lambda - \mu_1) a \cr v  & \lambda I_{k-1} - va \cr \end{matrix} \right] .
$$
Since $k-1 \ge 2$ and $va$ is a nonzero rank one matrix, the matrix $ \lambda I_{k-1} - va$ is nonscalar and we can complete the induction step exactly in the same way as in the previous case.

It remains to consider the case when $B$ is similar to the matrix
$$
C = \left[ \begin{matrix} \mu_1 & 0 \cr 0  & \lambda I_{k-1} \cr \end{matrix} \right],
$$
where 
$$
\lambda = (k-1)^{-1} (\mu_2 + \ldots + \mu_k ) \not = \mu_1.
$$
If $\mu_2 = \ldots = \mu_k = \lambda$, then we are done. So, assume this is not true. Then clearly, at least one of the elements $\mu_2, \ldots , \mu_k$ is not contained in the set $\{ \mu_1 , \lambda \}$. We will consider just one of the cases, say the case when 
$\mu_2 \not\in \{ \mu_1 , \lambda \}$. Then by the first step of the proof the matrix $C$ is similar to the matrix
$$
D = \left[ \begin{matrix} \mu_2 & u \cr v  & D_{1} \cr \end{matrix} \right],
$$
where $D_1$ is a $(k-1) \times (k-1)$ matrix. Obviously,
$$
{\rm tr}\, D_1 = \mu_1 + \mu_3 + \ldots + \mu_k .
$$
It is clear that the possibility that $D_1$ is a scalar matrix and $u=0$ and $v=0$ cannot occur since  $\mu_2$ would then be an eigenvalue of $C$, a contradiction. Hence, either $D_1$ is not a scalar matrix or $D_1$ is a scalar matrix and not both of $u$ and $v$ are zero. In either case we apply the induction hypothesis in the same way as above to conclude that $B$ is similar to
 a matrix  whose 
  diagonal entries  are $\mu_2 , \mu_1 , \mu_3 , \ldots , \mu_k$. Applying a suitable permutation similarity we finally  conclude that $B$ is similar to
 a $k \times k$ matrix  whose 
  diagonal entries  are $\mu_1 ,\mu_2, \ldots , \mu_k$.
\end{proof}

\bigskip

\noindent
{\em Proof of Theorem \ref{glavni}.}
Let $T$ be an $n\times n$ trace zero matrix. The result is trivial if  $T=0$, so we  assume that $T \not=0$. In particular, $T$ is not a scalar matrix and $n \ge 2$. The special case when $n=2$ will be treated at the end of the proof. So, assume that $n \ge 3$. It is also clear that the desired conclusion holds for a given $T$ if and only it holds for any matrix that is similar to $T$. Clearly, the fact that $T$ is nonscalar implies that $T$ is similar to a matrix whose bottom-right $2 \times 2$ corner is not scalar. Therefore, there is no loss of generality in assuming that $T$ is represented in a block form
$$
T = \left[ \begin{matrix} T_1 & T_2 \cr T_3  & T_4 \cr \end{matrix} \right],
$$
where the sizes of $T_1$ and $T_4$ are $q \times q$ and $(n-q) \times (n-q)$, respectively, and either $n-q < q$ or  $n=2q$ and $T_4$ is not a scalar matrix. By Lemma \ref{prva} there exist matrices $U',V',W'$ of the appropriate sizes such that $U'$ is not  a scalar matrix and
$$
A'_1= \left[ \begin{matrix} U' & V' \cr W'  & \alpha_1^{-1}T_4 \cr \end{matrix} \right]
$$
is similar to $D$. If $T_1 - \alpha_1 U'$ is not a scalar matrix set
$$
A_1 = A'_1, \ \ \ U = U', \ \ \ V = V', \ \ \ {\rm and} \ \ \ W = W'.
$$
If $T_1 - \alpha_1 U'$ is a scalar matrix then, because $U'$ is not a scalar matrix, we can use Lemma \ref{bibim} to find an invertible $q \times q$ matrix $R$ such that $T_1 - \alpha_1 R U' R^{-1}$ is not a scalar matrix. In this case we set
$$
U = RU' R^{-1}, \ \ \ V = RV', \ \ \ {\rm and} \ \ \ W = W' R^{-1},
$$
and
\begin{align*}
A_1&= \left[ \begin{matrix} U & V \cr W  & \alpha_1^{-1}T_4 \cr \end{matrix} \right] = \left[ \begin{matrix} RU' R^{-1} & RV' \cr W' R^{-1}  & \alpha_1^{-1}T_4 \cr \end{matrix} \right]\\
&= \left[ \begin{matrix} R & 0 \cr 0  & I_{n-q} \cr \end{matrix} \right] \cdot A_1'\cdot
\left[ \begin{matrix} R^{-1} & 0 \cr 0  & I_{n-q} \cr \end{matrix} \right] .
\end{align*}

Since ${\rm tr}\, A_1 = {\rm tr}\, D = \lambda_1 + \ldots + \lambda_q$ and ${\rm tr}\, T = 0$, we have
$$
T-\alpha_1 A_1 = \left[ \begin{matrix} T_1 - \alpha_1 U& T_2 -\alpha_1 V \cr T_3 -\alpha_1 W  & 0 \cr \end{matrix} \right]
= \left[ \begin{matrix} S_1 & S_2 \cr S_3  & 0 \cr \end{matrix} \right]
$$
with $${\rm tr} \, S_1 = - \alpha_1\lambda_1 - \ldots - \alpha_1 \lambda_q.$$ Moreover, $S_1$ is not a scalar matrix.

We need to show that  there exist $n\times n$ matrices $A_2,A_3$ similar to $D$ such that
$
T-\alpha_1 A_1 = \alpha_2 A_2+\alpha_3A_3.
$
By Lemma \ref{seto}, we see that after applying an appropriate similarity transformation we may assume with no loss of generality that the diagonal entries of $S_1$ are $-\alpha_1 \lambda_1, \ldots , -\alpha_1 \lambda_q$. Let $L$ be the lower triangular $q\times q$ matrix with diagonal entries $\lambda_1 , \ldots , \lambda_q$ whose strictly lower triangular part coincides with the strictly lower triangular part of $\alpha_2^{-1}S_1$. Let further   $M$ be the upper triangular $q\times q$ matrix with diagonal entries $\lambda_1 , \ldots , \lambda_q$ whose strictly upper triangular part coincides with the strictly upper triangular part of $\alpha_3^{-1} S_1$. Set
$$
A_2 = \left[ \begin{matrix} L & 0 \cr \alpha_2^{-1}S_3  & 0 \cr \end{matrix} \right] \ \ \ {\rm and} \ \ \ A_3 =  \left[ \begin{matrix} M & \alpha_3^{-1} S_2 \cr 0  & 0 \cr \end{matrix} \right].
$$
Then  $A_2$ is similar to $D$ by Lemma  \ref{bubu}, $A_3$  is similar to $D$ by Lemma  \ref{bubum}, and 
using 
$\alpha_1+\alpha_2 + \alpha_3 =0$ we see that
$T-\alpha_1 A_1 = \alpha_2 A_2+\alpha_3A_3$.
 
It remains to consider the case when $n=2$. Then
$$
D = \left[ \begin{matrix} \lambda_1 & 0 \cr 0 & \lambda_2 \cr \end{matrix} \right]  = (\lambda _ 1 - \lambda_2 ) \left[ \begin{matrix} 1 & 0 \cr 0  & 0\cr \end{matrix} \right] + \lambda_2 I_2
$$
with $\lambda_1 \not = 0$ and $\lambda_1 \not= \lambda_2$. Clearly, a matrix  $T \in {\rm sl}_2(F)$ can be written as $$T = \alpha_1 A_1 + \alpha_2 A_2 +\alpha_3A_3$$
 for some matrices $A_1,A_2,A_3$ that are similar to $D$ if and only if $T$ can be written as $$T = \alpha_1 (\lambda _ 1 - \lambda_2 ) B_1 + \alpha_2 (\lambda _ 1 - \lambda_2) B_2 +\alpha_3 (\lambda _ 1 - \lambda_2) B_3$$
 for some matrices $B_1,B_2,B_3$ that are similar to
$
 \left[ \begin{smallmatrix} 1 & 0 \cr 0  & 0\cr \end{smallmatrix} \right].
$
Hence, with no loss of generality we can assume that $D =  \left[ \begin{smallmatrix} 1 & 0 \cr 0  & 0\cr \end{smallmatrix} \right]$.

Let  $\alpha_1,\alpha_2,\alpha_3$ be nonzero elements from $F$ such that $\alpha_1+\alpha_2+\alpha_3=0$ and  $T \in {\rm sl}_2(F)$ a nonscalar matrix. By Lemma \ref{seto}, $T$ is similar to
$$
\left[ \begin{matrix} { \alpha_1 - \alpha_2 \over 2} & \gamma \cr \delta &  { \alpha_2 - \alpha_1 \over 2} \cr \end{matrix} \right]
$$
for some $\gamma , \delta \in F$. It is easy to determine $a,b \in F$ such that
$$
\left[ \begin{matrix} { \alpha_1 - \alpha_2 \over 2} & \gamma \cr \delta &  { \alpha_2 - \alpha_1 \over 2} \cr \end{matrix} \right] = \alpha_1  \left[ \begin{matrix} 1 & a \cr 0  & 0\cr \end{matrix} \right] 
+ \alpha_2  \left[ \begin{matrix} 0 & 0 \cr b  & 1 \cr \end{matrix} \right] - (\alpha_1 + \alpha_2 )  \left[ \begin{matrix} \frac{1}{2} & \frac{1}{2} \cr \frac{1}{2}  & \frac{1}{2}\cr \end{matrix} \right] .
$$
This completes the proof.
\hfill\(\Box\)

\bigskip

We conclude this section by showing that Theorem \ref{glavni} cannot be improved to state that
$T$ is a linear combination of two matrices similar to $D$.

\begin{example}
Let $n\ge 6$ and $\frac{n}{2}\le q < n-2$.
Pick an idempotent $E$ of rank one and set 
$T=I_n-nE$. Observe that $T$ has trace zero. Suppose $T=\alpha_1 A_1 + \alpha_2 A_2$
for some scalars $\alpha_1, \alpha_2$
and  matrices $A_1$ and $A_2$ that are similar to $D$.
Thus, $$\alpha_2 A_2+  nE = I_n-\alpha_1 A_1.$$ Since 
$I_n-\alpha_1 A_1$ is similar to $I_n-\alpha_1 D$, it has rank at least $n-1$. On the other hand, the subadditivity  of rank implies that $\alpha_2 A_2 + nE$ has rank at most $q+1 < n-1$, a contradiction.
\end{example}

\section{Main theorem}

 Let $F\langle \mathcal X\rangle$ denote the free algebra generated by the set $\mathcal X = \{X_1,X_2,\dots\}$ over the field $F$.
 The elements of $F\langle \mathcal X\rangle$ are called {\em noncommutative polynomials}.
If $f=f(X_1,\dots,X_m)\in F\langle \mathcal X  \rangle$ is a noncommutative polynomial and $\A$ is  an $F$-algebra, we call the set
$$f(\A)=\{f(a_1,\dots,a_m)\,|\,a_1,\dots,a_m\in \A\}$$
 the {\em image of $f$} in $\A$. We refer the reader to the paper \cite{R} which surveys the 
 study of 
 images of noncommutative polynomials in (mostly matrix) algebras   over the recent years.
 
 
 We will only be interested in the case where 
 $\A=M_n(F)$. 
 Then it may happen that $f(\A)=\{0\}$, in which case we call $f$ a {\em polynomial identity} of $\A$.
 It may also happen that 
 $f(\A)$ consists only of scalar matrices but $f(\A)\ne \{0\}$, in which case we call $f$ a 
{\em central polynomial} for $\A$. Further,  if $f$
is the sum of commutators in  $F\langle \mathcal X  \rangle$
and a polynomial identity, then all matrices in 
$f(\A)$ have trace zero. This explains why the question of representing a matrix $T$ as a linear combination of matrices from $f(\A)$ with $f$ any nonconstant polynomial makes sense only when $T$ has trace zero.

In \cite{BS} it was shown that
if $F=\C$ and
 $f\in  F\langle \mathcal X\rangle$  is neither a polynomial identity nor a central polynomial of $\A=M_n(F)$, then every trace zero matrix $T$ in $\A$ can be written as $T=A_1+A_2-A_3-A_4$ for some $A_i\in f(\A)$. It was also shown that for certain polynomials $f$ satisfying these assumptions, not every trace zero matrix $T$ is the difference of two elements from $f(\A)$. 
 The degrees of such polynomials $f$ must be larger than a certain number depending on the size of the matrices. The same comment can be made  for degrees of polynomial identities and central polynomials (see (I) below).
 That is why the problem presented in the introduction makes sense only for sufficiently large $n$.
 
The main goal of this paper is to prove the following theorem.

\begin{theorem}\label{mt}
Let $F$ be an algebraically closed field of characteristic $0$, 
  let $f\in F\langle \mathcal X\rangle $ be a polynomial of degree $m\ge 1$, and let $\A=M_n(F)$ with $n\ge 2$ and  $n \ge m-1$. Further, let
 $\alpha_1,\alpha_2,\alpha_3$ be nonzero elements from $F$ such that $\alpha_1+\alpha_2+\alpha_3=0$. Then every trace zero matrix $T$ in $\A$ can be written as
 $$T=\alpha_1 A_1+\alpha_2A_2+\alpha_3A_3$$ for some
 $A_1,A_2,A_3\in f(\A)$.
\end{theorem}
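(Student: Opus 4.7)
Reduce to matrix algebras of prime size via Bertrand's postulate, use the cited result from \cite{BS} to find an element of $f(M_p(F))$ with the right spectral shape, lift it to $M_n(F)$ through a block embedding, and conclude with Theorem \ref{glavni}.

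As a preliminary reduction, write $f = c + f_0$ with $c = f(0,\ldots,0) \in F$ and $f_0$ of zero constant term. Because $\alpha_1 + \alpha_2 + \alpha_3 = 0$, for any $A_1,A_2,A_3 \in \A$ we have $\sum_i \alpha_i f(A_i) = \sum_i \alpha_i f_0(A_i)$, so replacing $f$ by $f_0$ we may assume that $f(0,\ldots,0)=0$. This is what will make the block embedding in the next step compatible with $f$.

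By Bertrand's postulate pick a prime $p$ with $\tfrac{n+1}{2} < p \le n$; an elementary check shows such $p$ exists for every $n \ge 2$. Then $2p \ge n + 2 > n + 1 \ge m$, so the Amitsur--Levitzki theorem rules out $f$ being a polynomial identity of $M_p(F)$ (and analogous degree considerations for central polynomials of $M_p(F)$ rule out $f$ being central there under the hypotheses of the cited result). The \cite{BS} result for $M_p(F)$ then provides a matrix $B \in f(M_p(F))$ similar to a diagonal matrix $D_p \in M_p(F)$ with $p$ pairwise distinct nonzero diagonal entries $\lambda_1,\ldots,\lambda_p$. Since $f$ has zero constant term, evaluating $f$ on block matrices of shape $\left[\begin{smallmatrix} X_i & 0 \\ 0 & 0 \end{smallmatrix}\right]$, with $X_i \in M_p(F)$ and zero block of size $n-p$, yields a matrix of the same block shape carrying $f(X_1,\ldots,X_m)$ in the upper-left corner. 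Choosing the $X_i$ so that $f(X_1,\ldots,X_m)=B$ produces an element of $f(M_n(F))$ similar to $D := \left[\begin{smallmatrix} D_p & 0 \\ 0 & 0 \end{smallmatrix}\right] \in M_n(F)$.

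Because $U f(Y_1,\ldots,Y_m) U^{-1} = f(UY_1 U^{-1},\ldots,UY_m U^{-1})$ for every invertible $U$, the set $f(M_n(F))$ is closed under similarity, hence contains every matrix similar to $D$. With $q := p$ the inequality $\tfrac{n}{2} \le q \le n$ holds, so Theorem \ref{glavni} supplies $A_1,A_2,A_3 \in f(M_n(F))$ (each similar to $D$) with $T = \alpha_1 A_1 + \alpha_2 A_2 + \alpha_3 A_3$, completing the argument. The sharpest point is the invocation of the \cite{BS} result: we need not merely that $f(M_p(F))$ be large (say, that it span $\mathrm{sl}_p(F)$) but that it explicitly contain a matrix whose spectrum consists of $p$ distinct nonzero scalars, which is precisely the spectral shape demanded by the input of Theorem \ref{glavni}.
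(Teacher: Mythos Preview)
Your approach is the paper's: pick a prime $p$ in $(\tfrac{n+1}{2},\,n]$ via Bertrand, use the degree bound to see that $f$ is neither a polynomial identity nor central on $M_p(F)$, invoke the result from \cite{BS} on $M_p(F)$, embed blockwise into $M_n(F)$, observe that $f(\A)$ is closed under similarity, and finish with Theorem~\ref{glavni}. Your preliminary reduction to $f(0,\dots,0)=0$ is a clean way to justify the block embedding, a point the paper handles more implicitly.

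There is one genuine gap. The input you draw from \cite{BS} (the paper's fact (II)) only guarantees that $f(M_p(F))$ contains a diagonal matrix with $p$ pairwise \emph{distinct} eigenvalues; it does not assert that all of them are nonzero. Your closing paragraph explicitly demands ``$p$ distinct nonzero scalars,'' and you apply Theorem~\ref{glavni} with $q=p$ only; this is an over-claim of what \cite{BS} provides. The paper accounts for the possibility that one of the $p$ eigenvalues is $0$ by applying Theorem~\ref{glavni} with either $q=p$ or $q=p-1$. Your choice of prime already gives $p-1\ge \tfrac{n}{2}$, so the same two-case application of Theorem~\ref{glavni} repairs your argument immediately; but as written the proof is incomplete at exactly the point you flagged as ``sharpest.''
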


In the proof  we will need the following three results.
\begin{enumerate}
    \item[(I)] Let $f$ be a noncommutative polynomial of 
    degree $m\ge 1$. It is a standard fact that if $p$ is a positive integer such that $m<2p$, then $f$ cannot be a polynomial identity of $M_p(F)$.
    Using this fact, it can be easily shown that under the same assumption $f$ also  cannot be a  central polynomial for $M_p(F)$ (see, for example, \cite[Corollary 2.4]{F}). 
    
    \item[(II)] Let $F$ be an algebraically closed field of characteristic $0$, let $p$ be a prime,  
    and let $f$ be a polynomial which is neither a polynomial identity
nor a central polynomial of $M_p(F)$. Then
$f(M_p(F))$ contains a diagonal matrix $D$ with distinct eigenvalues  on the diagonal. Although  not stated as a theorem, this is clearly evident from the proof of \cite[Theorem 4.1]{BS}.
    \item[(III)] We will also use {\em Bertrand's postulate} which states that for every integer $r\ge 4$ there exists a prime number $p$ such that $r < p <2r-2$.
\end{enumerate}

\bigskip

\noindent {\em Proof of Theorem \ref{mt}.} We first claim that there exists
a prime number $p$ such that
$$\frac{n}{2} +1\le  p\le n.$$
 If $n=2$ then we take $p=2$,
and if $n=3$ or $n=4$ then we take $p=3$.
If $n\ge 5$, then
$[\frac{n+1}{2}] + 1\ge 4$ and so (III) yields the existence of a prime number $p$ such that
$$\Bigl[\frac{n+1}{2}\Bigr]+1 < p < 2\Bigl(\Bigl[\frac{n+1}{2}\Bigr]+1\Bigr)-2 = 2 \Bigl[\frac{n+1}{2}\Bigr],
$$
which readily implies that $p$ satisfies the desired condition.

Since
$$m\le n+1 \le 2(p-1)+1 = 2p-1$$
it follows from (I) that $f$ is neither a polynomial identity nor a central polynomial of $M_p(F)$.
Therefore, (II) tells us that 
$f(M_p(F))$ contains a diagonal matrix $D$ containing $p$ distinct diagonal entries. Identifying every matrix  $A\in M_p(F)$ with the matrix $\left[ \begin{smallmatrix} A & 0 \cr 0  & 0 \cr \end{smallmatrix} \right]\in M_n(F)$, we can consider $D$ as
a diagonal matrix belonging to $f(\A)$. Since
$$Sf(A_1,\dots,A_m)S^{-1} =
 f(SA_1S^{-1},\dots, SA_mS^{-1})$$
holds for all $A_i,S\in \A$ with $S$ invertible, $f(\A)$ also contains 
all matrices that are similar to $D$.
Observe that $D$ has either $p-1$ or $p$ distinct nonzero   diagonal entries  and all its other diagonal entries are $0$. Since $$\frac{n}{2} \le p-1 < p \le n,$$
the desired conclusion follows from  Theorem \ref{glavni}
(with either $q=p-1$ or $q=p$).
\hfill\(\Box\)

\bigskip

\noindent
{\bf Acknowledgment}. 
The authors would like to thank the referee for insightful comments.

\end{document}